\documentclass{article}
\topmargin=0cm \oddsidemargin=0mm \textwidth=14.7cm \textheight=22cm
\parindent=0cm
\parskip=5mm
\usepackage{amsfonts}
\usepackage{latexsym}
\usepackage{amsmath}
\usepackage{amssymb}
\usepackage{amscd}
\usepackage{epsfig}
\usepackage{colortbl}
\usepackage{xcolor}
\addtocounter{MaxMatrixCols}{4}
\newtheorem{definition}{Definition}[section]

\newtheorem{lemma}[definition]{Lemma}
\newtheorem{theorem}[definition]{Theorem}

\newtheorem{conjecture}[definition]{Conjecture}
\newcommand{\qed}{\hskip 10pt $\Box$}
\newenvironment{proof}{\par {\sc {\bf Proof.}\hskip 5pt}}{\hfill \qed \par}
\newcommand{\undertilde}[1]{\ensuremath{\mathord{\vtop{\ialign{##\crcr
   $\hfil\displaystyle{#1}\hfil$\crcr\noalign{\kern1.5pt\nointerlineskip}
   $\hfil\tilde{}\hfil$\crcr\noalign{\kern1.5pt}}}}}}


\begin{document}

\title{On the Minimum Number of Hamiltonian Cycles in Regular Graphs}
\author{M. Haythorpe\thanks{Flinders University, Australia. Tel.: +618-820-12375. Email: michael.haythorpe@flinders.edu.au}}

 \maketitle

\begin{abstract}A graph construction that produces a $k$-regular graph on $n$ vertices for any choice of $k \geq 3$ and $n = m(k+1)$ for integer $m \geq 2$ is described. The number of Hamiltonian cycles in such graphs can be explicitly determined as a function of $n$ and $k$, and empirical evidence is provided that suggests that this function gives a tight upper bound on the minimum number of Hamiltonian cycles in $k$-regular graphs on $n$ vertices for $k \geq 5$ and $n \geq k+3$. An additional graph construction for 4--regular graphs is described for which the number of Hamiltonian cycles is superior to the above function in the case when $k = 4$ and $n \geq 11$.\end{abstract}

\section{Introduction}

The Hamiltonian cycle problem (HCP) is a, now classical, graph theory problem that can be stated as follows: given a graph $G$ on $n$ vertices, determine whether any simple cycles of length $n$ exist in $G$. Such simple cycles are called {\em Hamiltonian cycles}, and graphs which contain at least one Hamiltonian cycle are called {\em Hamiltonian}. HCP is known to be NP-complete even when restricted to regular graphs \cite{tarjan}. The related problem of determining the number of Hamiltonian cycles in a graph is a \#P-complete problem.

There have been a number of results and conjectures regarding upper and lower bounds on the numbers of Hamiltonian cycles in regular graphs. Eppstein \cite{eppstein} conjectured that 3-regular graphs on $n$ vertices have at most $2^{n/3}$ Hamiltonian cycles, and provided a family of graphs meeting this bound for each positive $n = 0\mod 6$. While the validity of this conjecture has not yet been determined, empirical evidence indicates it is likely to be true. Gebauer \cite{gebauer} has provided the best proven upper bound of $1.276^n$. Gebauer's construction is also generalised for regular graphs of higher degree, providing a graph with $\left( \lceil \frac{k-1}{2} \rceil (k-1)! (k-2)! \lfloor \frac{k+1}{2} \rfloor \right)^{\frac{n}{2k}}$ for any $k \geq 3$ and $n$ divisible by $2k$.

In terms of regular graphs with few Hamiltonian cycles, it is known that Hamiltonian 3-regular graphs must contain at least three HCs \cite{tutte}, and there are infinite families of 3--regular graphs with this number of HCs, such as the generalized Petersen graphs \cite{watkins} GP(n,2) for $n = 3 \mod 6$. However, such bounds are not known for $k$-regular graphs for $k \geq 4$, and in fact, it is still not known whether {\em uniquely Hamiltonian} $k$-regular graphs exist (that is, $k$-regular graphs with only a single Hamiltonian cycle), although it can be seen from \cite{tutte} that if such a graph were to exist, it must be for even $k$, and further from \cite{haxell} that $k \leq 22$. Empirical evidence indicates that such a graph is extremely unlikely to exist.

In this manuscript, a construction is provided that produces a graph for any choice of $k \geq 3$, and $n = m(k+1)$ for integer $m \geq 2$. It will be shown that the number of Hamiltonian cycles in such a graph is $h(n,k) := (k-1)^2\left[(k-2)!\right]^{m}$, which we will observe appears to be a small value for $k$-regular graphs. Note that $h(n,3) = 4$ irrespective of the choice of $n$, but this result is dominated by the presence of 3-regular graphs with three HCs. For $k = 4$ an alternative construction will be provided for which the number of Hamiltonian cycles is less than $h(n,4)$ for all $n \geq 11$. For any choice of $k \geq 5$, however, it will be conjectured that there are at most two $k$-regular graphs with fewer HCs than $\lceil h(n,k) \rceil$. In particular, for odd $k \geq 5$ there will be precisely one of order $k+1$, and for even $k > 5$ there will be two, of order $k+1$ and $k+2$ respectively.

\section{Construction}

For any choice of $k \geq 3$ and $n = m(k+1)$ for integer $m \geq 2$, a graph $G_{n,k}$ can be constructed by the following scheme.

\begin{enumerate}\item Take $m$ copies of $K_{k+1}$, and label them $K^1_{k+1}, K^2_{k+1}, \hdots, K^m_{k+1}$.
\item Remove one edge in each of $K^1_{k+1}$ and $K^m_{k+1}$. In each case the removed edge was incident on two vertices. Label the two vertices in $K^1_{k+1}$ as $a_1$ and $v_1$, and the two vertices in $K^m_{k+1}$ as $b_m$ and $v_m$.
\item Remove two edges incident on a common vertex in each of $K^2_{k+1}, \hdots, K^{m-1}_{k+1}$. In each case, label the common vertex $v_j$ for $j = 2, \hdots, m-1$. Also, each removed edge was incident on another vertex, label these $a_j$ and $b_j$ for $j = 2, \hdots, m-1$.
\item Add edges $(v_i,v_{i+1})$ and $(a_i,b_{i+1})$ for all $i = 1, \hdots, m-1$.
\end{enumerate}

An example of the above construction for $k = 5$ and $m = 4$ can be seen in Figure \ref{fig-example}.

\begin{figure}[h!]\begin{center}\includegraphics[scale=0.3]{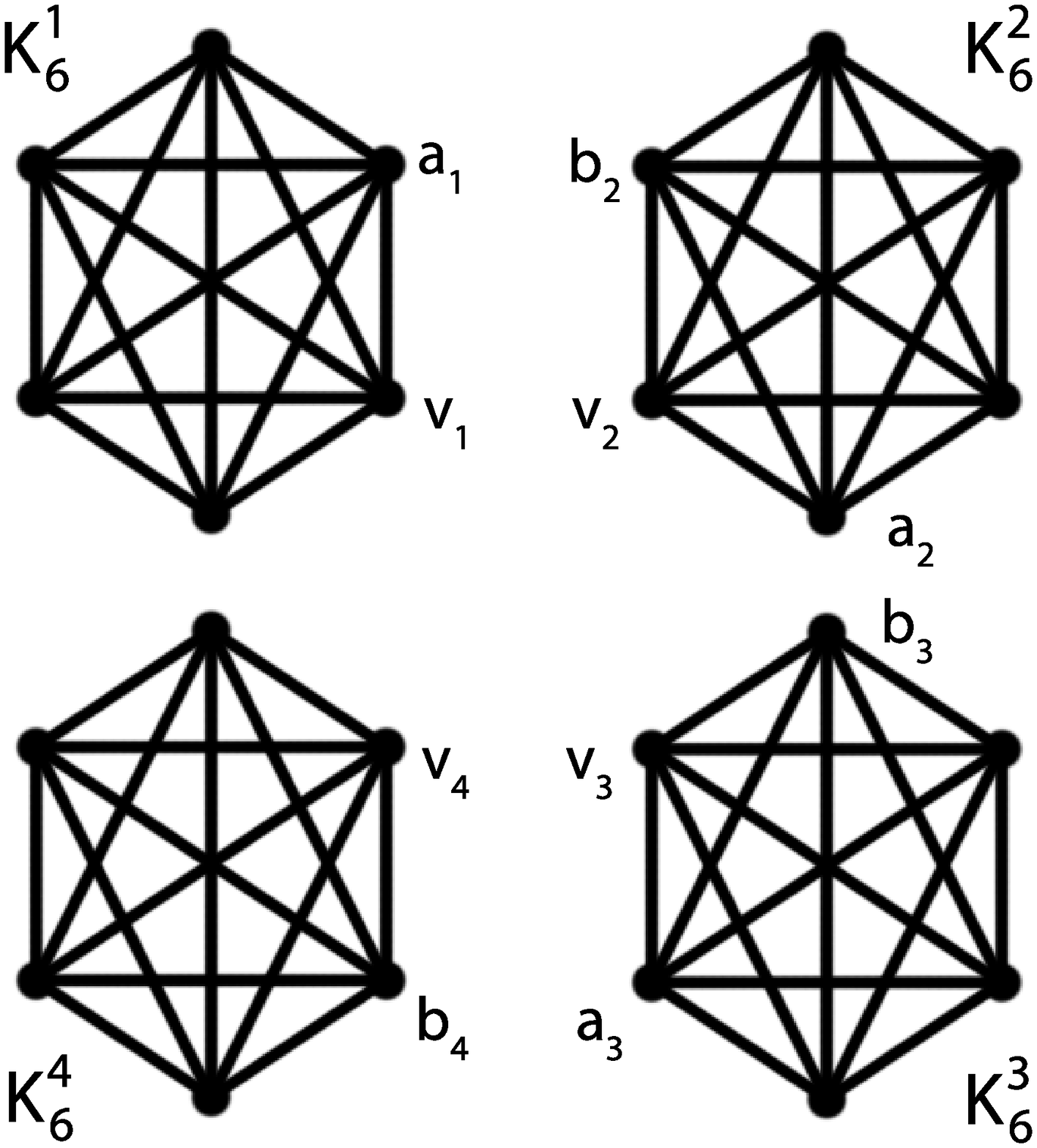} \hspace*{2cm} \includegraphics[scale=0.3]{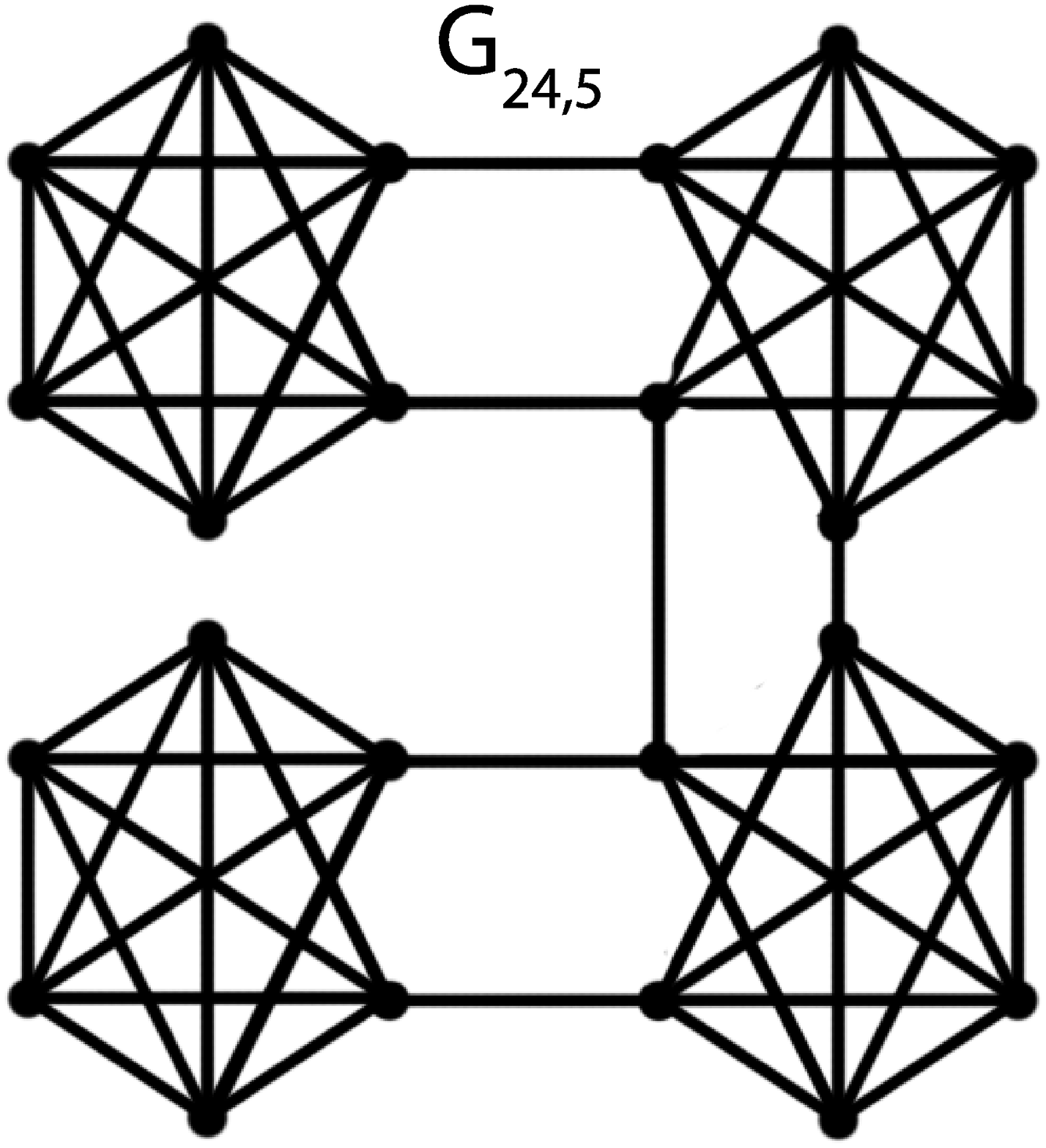}\caption{An example of the above construction for $k = 5$ and $m = 4$. The four component graphs are displayed on the left, and the constructed graph $G_{24,5}$ is displayed on the right.}\label{fig-example}\end{center}\end{figure}

It is easy to check that $G_{n,k}$ constructed in the above way is connected and $k$-regular. Next, we determine the number of HCs in $G_{n,k}$, which we denote by $h(n,k)$.

\begin{theorem}The number of Hamiltonian cycles in $G_{n,k}$ is

$$h(n,k) = (k-1)^2\left[(k-2)!\right]^{\frac{n}{k+1}}.$$\label{thm-numhc}\end{theorem}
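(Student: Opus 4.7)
The plan is to exploit the fact that the construction glues consecutive cliques with only two edges apiece. First I would show that every Hamiltonian cycle in $G_{n,k}$ must use all $2(m-1)$ bridge edges: for each $i \in \{1, \ldots, m-1\}$, the edge cut separating $K^1_{k+1} \cup \cdots \cup K^i_{k+1}$ from $K^{i+1}_{k+1} \cup \cdots \cup K^m_{k+1}$ consists of exactly $(v_i, v_{i+1})$ and $(a_i, b_{i+1})$. A Hamiltonian cycle must cross this cut an even number of times, and at least twice since both sides are nonempty, so it must use both of these edges.

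Next I would count the ways an HC can restrict to each modified clique once all bridge edges are forced. In $K^1_{k+1}$, the vertices $v_1$ and $a_1$ each use one external bridge edge, so each contributes exactly one internal HC-edge while every other vertex contributes two; the restriction is therefore a Hamiltonian path from $v_1$ to $a_1$. Since the removed edge joins the two endpoints of the path, it is never used, so the count is the standard $(k-1)!$. A symmetric analysis of $K^m_{k+1}$ again gives $(k-1)!$. For a middle clique $K^j_{k+1}$ with $2 \le j \le m-1$, both external edges at $v_j$ are forced into the cycle, so $v_j$ contributes no internal edge, while $a_j$ and $b_j$ each contribute exactly one; the restriction is a Hamiltonian path from $a_j$ to $b_j$ in the complete subgraph on $K^j_{k+1} \setminus \{v_j\}$, yielding $(k-2)!$ choices.

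The only remaining point, which I expect to be the main (if minor) obstacle, is to verify that any independent choice of these local paths actually reassembles into a single Hamiltonian cycle rather than a disjoint union of several cycles. Tracing through the forced bridge edges shows that the cycle reads
$$v_1 \to \cdots \to a_1 \to b_2 \to \cdots \to a_2 \to b_3 \to \cdots \to a_{m-1} \to b_m \to \cdots \to v_m \to v_{m-1} \to \cdots \to v_1,$$
with each $\cdots$ filled by the chosen local path and the final leg $v_m \to \cdots \to v_1$ supplied by the $v$-chain; this is manifestly a single cycle through every vertex. Since each HC is clearly determined by its local restrictions and every compatible collection yields an HC, the multiplication principle gives
$$h(n,k) \;=\; (k-1)!\,(k-1)!\,[(k-2)!]^{m-2} \;=\; (k-1)^2[(k-2)!]^m \;=\; (k-1)^2[(k-2)!]^{n/(k+1)},$$
which is the desired formula.
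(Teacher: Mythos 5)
Your proposal is correct and follows essentially the same route as the paper: both arguments force all bridge edges via the size-2 edge cuts and then count the $(k-1)!$, $[(k-2)!]^{m-2}$, $(k-1)!$ traversals of the end and middle cliques. You are somewhat more explicit than the paper about why the local restrictions are Hamiltonian paths and why every compatible choice reassembles into a single cycle, but this is a refinement of the same argument rather than a different one.
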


\begin{proof}Recall that $m = \frac{n}{k+1} \geq 2$, and note that edges $(v_i,v_{i+1})$ and $(a_i,b_{i+1})$ constitute edge cutsets of size 2 for each $i = 1, \hdots, m-1$. Hence, all of those edges must be included in every HC of $G_{n,k}$. In particular, this means that in the HC, each $v_i$ is visited between $v_{i-1}$ and $v_{i+1}$ for $i = 2, \hdots, m-1$.

Then, consider an HC beginning at $v_m$. The HC then travels through all $v_i$ in reverse order until it reaches $v_1$. At this point, the rest of $K^1_{k+1}$ must be traversed before visiting vertex $a_1$. There are $(k-1)!$ ways of doing so.

Then, each $K^j_{k+1}$ is visited in order for $j = 2, \hdots, m-1$. Each time, it is entered via vertex $b_j$. Then, the only remaining edge by which it may be departed emanates from $a_j$, so this vertex must be visited last, and vertex $v_j$ has previously been visited. This leaves $(k-2)!$ possible ways of traversing each $K^j_{k+1}$. Finally, $K^m_{k+1}$ is revisited via vertex $b_m$, and there remains $(k-1)!$ ways of visiting the remaining vertices before returning to $v_m$ and completing the HC.

It is then clear that

\begin{eqnarray*}h(n,k) & = & \left[(k-1)!\right]^2\left[(k-2)!\right]^{m-2}\\
& = & \frac{\left[(k-1)!\right]^2\left[(k-2)!\right]^m}{\left[(k-2)!\right]^2}\\
& = & (k-1)^2\left[(k-2)!\right]^{\frac{n}{k+1}}.\end{eqnarray*}\end{proof}

\section{Empirical Results}

%

As mentioned previously, $h(n,3) = 4$ for all values of $n$, so there are infinitely many 3-regular graphs with fewer Hamiltonian cycles than $G_{n,3}$. However, for some choices of $k \geq 4$, this may not be the case. To investigate this, the minimum number of Hamiltonian cycles in $k$-regular graphs of order $n$ was computed for various small values of $k$ and $n$. Those minimum numbers are displayed in Table \ref{tab-minhcs}, and they indicate that, at least for small values of $n$ where such calculations are tractable, there are very few graphs with fewer than $\lceil h(n,k) \rceil$ Hamiltonian cycles. The numbers in Table \ref{tab-minhcs} were computed by first using GENREG \cite{genreg} to construct all $k$-regular graphs on $n$ vertices for various values of $n$ and $k$, and then using the Hamiltonian cycle enumeration algorithm by Chalaturnyk \cite{chalaturnyk} to count the number of Hamiltonian cycles in each. In order to speed up the computations, the sets of graphs were partitioned into manageable subsets and distributed over 800 cores. In all cases checked exhaustively, the minimal example had vertex connectivity 2 if any such graphs existed for that choice of $k$ and $n$ (that is, for $n \geq 2k$). Indeed, it was typical for not only the minimal graph to have vertex connectivity 2, but the vast majority of \lq\lq near-minimal" graphs as well. To that end, in the cases where the number of graphs to be checked exceeded 100 million (respectively, $n \geq 17$ for $k = 4$, $n \geq 16$ for $k = 5$, $n \geq 15$ for $k = 6, 7$), only the graphs with vertex connectivity 2 were checked. Those figures are in boxes shaded grey to distinguish them from the values calculated exhaustively. 

\begin{table}[h]
\begin{center}
\begin{tabular}{|c|c|c|c|c|}
\hline  \;\;\; $k$ & 4 & 5 & 6 & 7 \\
$n$ & & & & \\
\hline 5 & 12 & - & - & -\\
\hline 6 & 16 & 60 & - & -\\
\hline 7 & 23 & - & 360 & -\\
\hline 8 & 29 & 177 & 744 & 2520\\
\hline 9 & 36 & - & 1553 & -\\
\hline 10 & 36 & 480 & 3214 & 14963\\
\hline 11 & 48 & - & 6564 & - \\
\hline 12 & 60 & 576 & 12000 & 87808\\
\hline 13 & 72 & - & 22680 & - \\
\hline 14 & 72 & 1296 & 14400 & 430920\\
\hline 15 & 72 & - & \cellcolor{lightgray} 29760 & - \\
\hline 16 & 72 & \cellcolor{lightgray} 3888 & \cellcolor{lightgray} 57600 & \cellcolor{lightgray} 518400 \\
\hline 17 & \cellcolor{lightgray} 96 & - & \cellcolor{lightgray} 118080 & - \\
\hline 18 & \cellcolor{lightgray} 108 & \cellcolor{lightgray} 3456 & \cellcolor{lightgray} 239040 & \cellcolor{lightgray} 2937600 \\
\hline

\end{tabular} \;\;\; \begin{tabular}{|c|c|c|c|c|}
\hline \;\;\; $k$ & 4 & 5 & 6 & 7\\
$n$ & & & & \\
\hline 5 & 18 & - & - & -\\
\hline 6 & 21 & 96 & - & -\\
\hline 7 & 24 & - & 600 & -\\
\hline 8 & 28 & 175 & 945 & 4320\\
\hline 9 & 32 & - & 1488 & -\\
\hline 10 & 36 & 317 & 2343 & 14299\\
\hline 11 & 42 & - & 3689 & - \\
\hline 12 & 48 & 576 & 5808 & 47324\\
\hline 13 & 55 & - & 9146 & - \\
\hline 14 & 63 & 1047 & 14400 & 156629\\
\hline 15 & 72 & - & 22675 & - \\
\hline 16 & 83 & 1902 & 35704 & 518400\\
\hline 17 & 96 & - & 56219 & - \\
\hline 18 & 110 & 3456 & 88523 & 1715775\\
\hline

\end{tabular}
\end{center}
\caption{The first table displays the minimum number of Hamiltonian cycles for all $k$-regular graphs on $n$ vertices, and the second table displays $\lceil h(n,k) \rceil$. Entries of '-' indicate no graphs exist for that choice of $n$ and $k$. Shaded values indicate that only graphs with vertex connectivity 2 were checked for that choice of $k$ and $n$.}
\label{tab-minhcs}\end{table}

In analysing Table \ref{tab-minhcs}, first consider the case where $k > 4$. The only graphs with fewer Hamiltonian cycles than $\lceil h(n,k) \rceil$ for $k > 4$ discovered during testing were $K_{k+1}$ for $k \geq 4$, and the {\em cocktail party graph} \cite{biggs} of order $\frac{k}{2} + 1$ for even $k \geq 4$. The complete graph $K_{k+1}$ contains $\frac{k!}{2}$ Hamiltonian cycles which is strictly less than $h(k+1,k) = (k-1)^2(k-2)!$ for all $k \geq 2$.

The cocktail party graph of order $l$ is equivalent to a complete graph on order $2l$ minus a perfect matching. The cocktail party graph of order 3 is displayed in Figure \ref{fig-cocktail3}. 

\begin{figure}[h!]\begin{center}\includegraphics[scale=0.3]{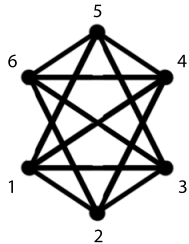}\caption{The cocktail party graph of order 3.}\label{fig-cocktail3}\end{center}\end{figure}

The number of Hamiltonian cycles in the cocktail party graph is equivalent to the solution of the so-called {\em relaxed m\'{e}nage problem}, which requests the number of ways to seat a given number of couples around a circular table so that nobody sits next to their partner, up to symmetry. Hence, we can use the result in \cite{relaxed} to determine the number of Hamiltonian cycles in the cocktail party graph of order $\frac{k}{2} + 1$:

$$\sum_{i=0}^{\frac{k}{2} + 1} (-1)^i \left(\begin{array}{c}\frac{k}{2} + 1\\i\end{array}\right) (k-i+1)! 2^{i-1}.$$

Using the link between the relaxed m\'{e}nage problem and the Hamiltonian cycle of the $n$-octohedron, we can take advantage of a result in \cite{singmaster} to see that the number of Hamiltonian cycles in the cocktail party graph of order $\frac{k}{2} + 1$ is strictly less than $\frac{(k+1)!}{2e}$. Then Stirling's formula for the approximation of factorials \cite{stirling} can be used to show that $h(k+2,k) > \frac{(k+1)!}{2e}$ for all $k \geq 6$. Hence, cocktail party graphs will always have fewer HCs than the equivalent sized graph arising from the construction in the previous section. Note that cocktail party graphs are always regular graphs of even order.

Note also that, for even $k$, the two graphs described above are the only two graphs that exist on order $n \leq k+2$. For odd $k$, only $K_{k+1}$ exists for $n \leq k+2$.

The analysis for the case where $k = 4$ reveals that, in addition to $K_5$ and the cocktail party graph of order 3, there were two more graphs found that have fewer Hamiltonian cycles than suggested by $\lceil h(n,4) \rceil$. The first is the circulant graph Ci$_7(1,2)$ which has 23 Hamiltonian cycles, which is less than $h(7,4) \approx 23.75$. The second is a graph on 16 vertices which is the first member of a new family of graphs described in the following section that is, to the best of the author's knowledge, the best known construction for 4--regular graphs with minimal Hamiltonian cycles.

The above analysis leads to the following conjecture.

\begin{conjecture}For $k \geq 5$ and $n \geq k+3$, all $k$-regular graphs have at least $h(n,k)$ Hamiltonian cycles.\label{conj1}\end{conjecture}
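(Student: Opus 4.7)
The plan is to proceed by strong induction on $n$, taking the values in Table \ref{tab-minhcs} as base cases, and to split the inductive step according to the vertex connectivity $\kappa(G)$ of a hypothetical $k$-regular counterexample $G$ on $n$ vertices. Since every Hamiltonian graph on $n \ge 3$ vertices is $2$-connected, and since the empirical data indicates that the minimum for $n \ge 2k$ is always attained with $\kappa(G) = 2$, the two cases to handle are $\kappa(G) = 2$ and $\kappa(G) \ge 3$, with the first expected to be tight at $h(n,k)$ and the second expected to be strictly larger.

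For $\kappa(G) = 2$, fix a minimum vertex cut $\{u,v\}$ and observe that, because $G$ is Hamiltonian, $G - \{u,v\}$ has exactly two components $C_1, C_2$: restricting any Hamilton cycle to $G - \{u,v\}$ gives a union of two paths, each confined to a single component. Setting $H_i = G[\{u,v\} \cup C_i]$, every Hamilton cycle of $G$ decomposes uniquely into a Hamilton $u$--$v$ path in $H_1$ together with one in $H_2$, so $\#\mathrm{HC}(G) = p(H_1;u,v)\cdot p(H_2;u,v)$ where $p(H;u,v)$ counts Hamilton $u$--$v$ paths. The key step is to establish, by induction on order, a lower bound of the form $p(H_i;u,v) \ge q(n_i,k)$ for an appropriate "block" function $q$, and then to show that $q(n_1,k)\cdot q(n_2,k) \ge h(n_1+n_2-2,k)$ with equality characterising the blocks of $G_{n,k}$, namely chains of $K_{k+1}$'s glued at single vertices. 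The two $(k-1)$ factors in $h(n,k)$ should emerge as the terminal-block path counts $(k-1)!$, while the interior blocks contribute the $(k-2)!$ factors, exactly matching the count in Theorem \ref{thm-numhc}.

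For $\kappa(G) \ge 3$, a standalone quantitative lower bound on $\#\mathrm{HC}(G)$ is required. A natural attack is a Thomason-style lollipop/edge-swap argument: starting from a single Hamilton cycle $C$ and exploiting that every vertex has $k-2 \ge 3$ chord incidences, one constructs a large family of distinct Hamilton cycles by iterated swaps along non-$C$ edges. In a $3$-connected $k$-regular graph one expects exponential growth of the form $c^n$ for some $c = c(k) > 1$, and for large enough $n$ this exceeds $[(k-2)!]^{n/(k+1)}$, reducing the problem to a finite list of small-$n$ exceptions that can be verified computationally in the spirit of Table \ref{tab-minhcs}.

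The principal obstacle is the $\kappa(G) \ge 3$ step: although heuristically such graphs must carry many Hamilton cycles, no existing technique is known to produce a clean constant $c(k)$ that robustly beats $[(k-2)!]^{1/(k+1)}$ across all regimes, particularly at intermediate $k$ and moderate $n$ where lollipop-style arguments degrade. A secondary obstacle is the tightness assertion in the $\kappa(G) = 2$ case: one must rule out alternative block structures that attain the same product without being chains of $K_{k+1}$, and this likely demands a delicate extremal analysis of Hamilton-path counts in near-regular blocks with two prescribed endpoints, a problem that is itself essentially open.
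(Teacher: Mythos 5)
The statement you are addressing is stated in the paper as a \emph{conjecture}: the paper offers no proof, only the computational evidence of Table \ref{tab-minhcs} and the constructions that show the bound would be tight. Your submission is likewise not a proof but a proof \emph{plan}, and by your own admission both branches of the plan terminate in open problems. Concretely: (i) in the $\kappa(G)\ge 3$ branch you need a quantitative lower bound of the form $c(k)^n$ with $c(k) > [(k-2)!]^{1/(k+1)}$, but Thomason's lollipop argument and its relatives yield parity statements (an even number of Hamiltonian cycles through a fixed edge when all degrees are odd), not counting lower bounds, and no substitute is supplied; (ii) in the $\kappa(G)=2$ branch the product formula $\#\mathrm{HC}(G)=p(H_1;u,v)\cdot p(H_2;u,v)$ is fine, but the pieces $H_i$ are not $k$-regular (the cut vertices $u,v$ lose degree), so the inductive hypothesis --- which is a statement about $k$-regular graphs --- does not apply to them, and the required extremal bound on Hamilton $u$--$v$ path counts in near-regular blocks is, as you say, essentially open. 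The induction therefore does not close in either case.

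Two further points. First, your base cases are drawn from Table \ref{tab-minhcs}, which covers only $k\le 7$ and small $n$; an induction on $n$ for the conjecture as stated would need base cases for every $k\ge 5$. Second, your description of the conjectured extremal graphs as ``chains of $K_{k+1}$'s glued at single vertices'' does not match the paper's construction: gluing at a single vertex creates a cut vertex and destroys Hamiltonicity. In $G_{n,k}$ consecutive cliques are joined by a pair of independent edges $(v_i,v_{i+1})$ and $(a_i,b_{i+1})$ forming a $2$-edge cutset, and the factors $(k-1)!$ and $(k-2)!$ arise from traversals of the end cliques and interior cliques respectively, as in the proof of Theorem \ref{thm-numhc}. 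Your outline is a reasonable sketch of how one might eventually attack the conjecture, and it correctly identifies where the difficulty lies, but as it stands it establishes nothing beyond what the paper already reports.
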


\section{Construction for 4--regular graphs}

A family of 4--regular graphs $G^*_{n,4}$ on $n = 10 + 6m$ vertices, for any $m \geq 1$, can be constructed as follows.

\begin{enumerate}\item Take two copies of $K_5$, and label them $P_1$ and $P_{m+2}$. Also take $m$ copies of the cocktail party graph of order 3, and label them $P_2, \hdots, P_{m+1}$. In each copy, label the vertices as in Figure \ref{fig-cocktail3}.
\item Remove edge $(1,2)$ in $P_1$, and edge $(1,5)$ in $P_{m+2}$.
\item Remove edges $(1,2)$ and $(1,5)$ in all copies of $P_2, \hdots, P_{m+1}$.
\item For each $i = 1, \hdots, m+1$, label vertex 1 from $P_i$ as $a$, vertex $2$ from $P_i$ as $b$, vertex $1$ from $P_{i+1}$ as $c$ and vertex $5$ from $P_{i+1}$ as $d$. Then add edges $(a,c)$ and $(b,d)$.
\end{enumerate}

The above construction is illustrated for $m = 2$ in Figure \ref{fig-quart}.

\begin{figure}[h!]\begin{center}\includegraphics[scale=0.25]{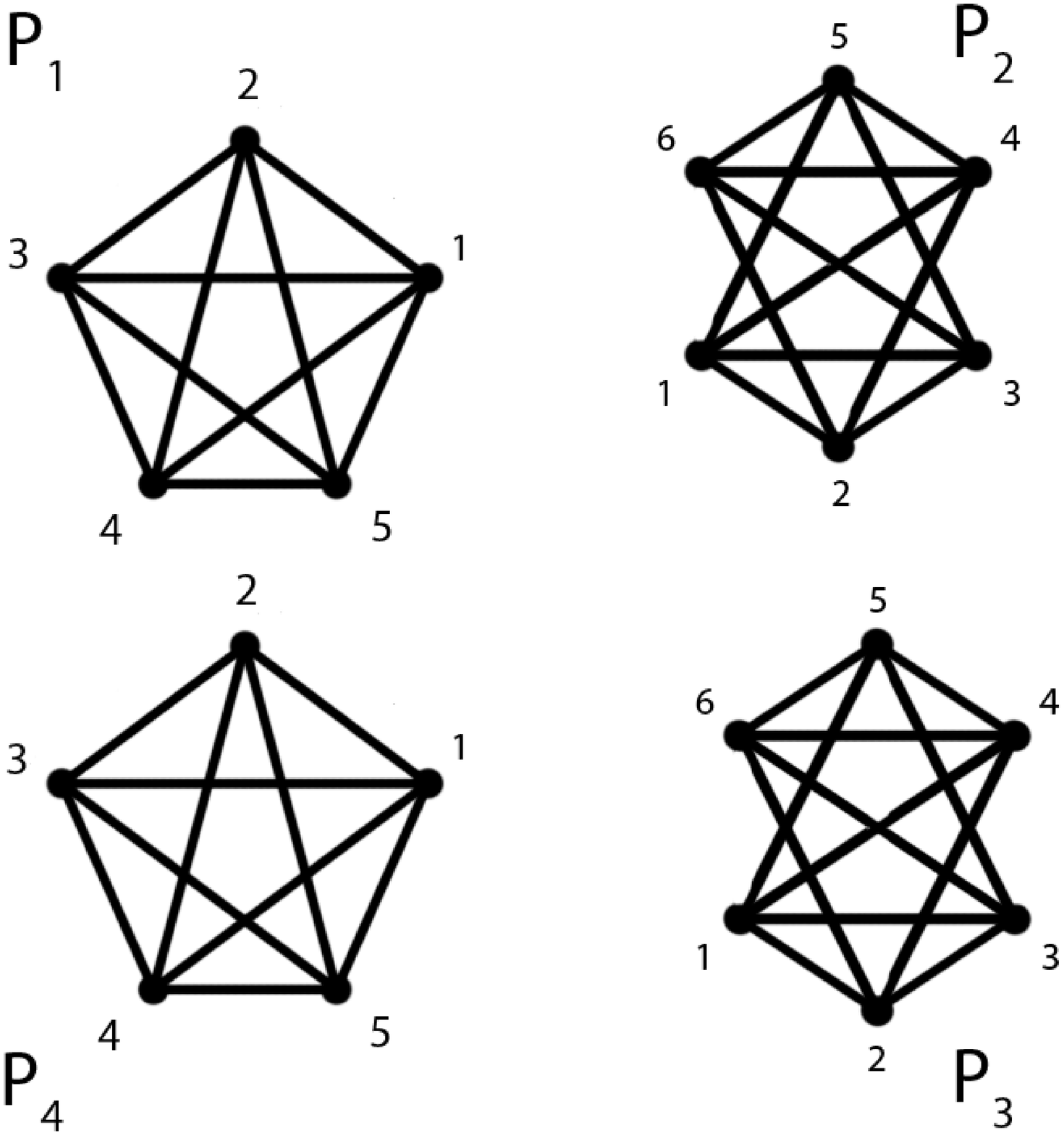} \hspace*{2cm} \includegraphics[scale=0.25]{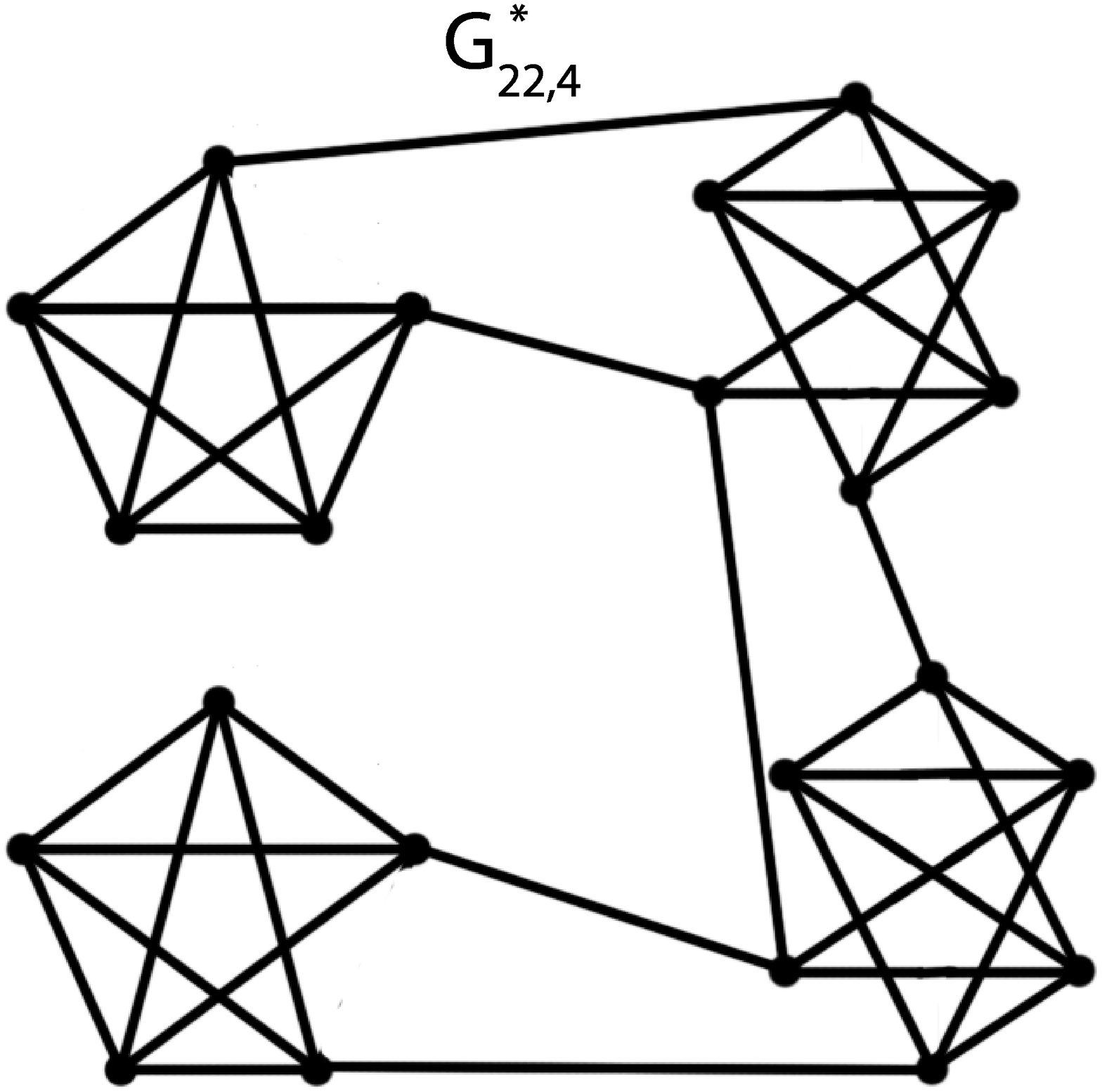}\caption{An example of the 4--regular construction for $m = 2$. The four component graphs are displayed on the left, and the constructed graph $G^*_{22,4}$ is displayed on the right.}\label{fig-quart}\end{center}\end{figure}

Using a similar argument to the proof of Theorem \ref{thm-numhc}, it can be seen that all Hamiltonian cycles in a graph constructed in this manner must use every one of the added edges. Then, it is easy to check that each of the $m$ copies of the cocktail party graph of order 3 can only be validly traversed in two ways each, and both copies of $K_5$ in six ways each. Hence, the following result emerges immediately.

\begin{lemma}The number of Hamiltonian cycles in $G^*_{n,4}$ for $n = 10 + 6m$ and $m \geq 1$ is $36(2)^m = 9(2)^{\frac{n+2}{6}}$.\end{lemma}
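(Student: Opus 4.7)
The plan is to follow the same strategy as the proof of Theorem \ref{thm-numhc}: identify forced edges via small edge-cutsets, then count the ways of traversing each component.

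First I would verify that the two added edges $(a,c)$ and $(b,d)$ joining $P_i$ to $P_{i+1}$ form an edge cutset of size~$2$ for each $i = 1, \ldots, m+1$, since removing them separates the vertex set of $P_1 \cup \cdots \cup P_i$ from that of $P_{i+1} \cup \cdots \cup P_{m+2}$. Consequently every Hamiltonian cycle must use all $2(m+1)$ added edges. This in turn forces vertex~$1$ of each middle component $P_j$ (for $j = 2, \ldots, m+1$) to be entered and exited only through its two external edges, so vertex~$1$ contributes no internal edge to the cycle, and the remaining five vertices of $P_j$ are traversed by an internal Hamiltonian path from vertex~$2$ to vertex~$5$; similarly, each endpoint component $P_1$ and $P_{m+2}$ must be traversed by an internal Hamiltonian path between its two attachment vertices.

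Next I would count these internal Hamiltonian paths component by component. For each endpoint component (a $K_5$ with one edge removed), the number of Hamiltonian paths between the two endpoints of the deleted edge is $3! = 6$, since any ordering of the three remaining vertices produces a valid path. For each middle component, a direct enumeration in the cocktail party graph of order~$3$, restricted to the five vertices $\{2,3,4,5,6\}$, shows that there are exactly two Hamiltonian paths with the prescribed endpoints. Multiplying these contributions gives
\[
6 \cdot 6 \cdot 2^m \;=\; 36 \cdot 2^m,
\]
and using $m = (n-10)/6$ yields $36 \cdot 2^m = 9 \cdot 2^{m+2} = 9 \cdot 2^{(n+2)/6}$.

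The only step that needs genuine care is the enumeration of the two Hamiltonian paths inside each middle component: one must fix the perfect matching defining the cocktail party graph of order~$3$ (as labelled in Figure~\ref{fig-cocktail3}), check which edges remain after deleting $(1,2)$ and $(1,5)$, and then verify by hand that exactly two paths from vertex~$2$ to vertex~$5$ cover the vertex set $\{2,3,4,5,6\}$. Everything else is the same cutset-plus-factorial bookkeeping used in Theorem~\ref{thm-numhc}.
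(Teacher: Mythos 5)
Your proposal is correct and follows essentially the same route as the paper: the paper likewise argues that the added edges form forced 2-edge-cutsets and then counts six traversals for each $K_5$ and two for each cocktail party component, yielding $36\cdot 2^m$. Your write-up merely makes explicit the details (the forced pass-through at vertex 1 of each middle block and the enumeration of the two internal Hamiltonian paths from vertex 2 to vertex 5) that the paper leaves as ``easy to check.''
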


It is easy to see that this figure is less than $h(n,4) = 9(2)^{\frac{n}{5}}$ for all $n \geq 11$.

The above construction can be generalised for even $k > 4$, using two copies of $K_{k+1}$ and $m$ copies of the cocktail party graph of order $\frac{k}{2} + 1$, but such graphs do not appear to have fewer than $h(n,k)$ Hamiltonian cycles. Experiments for small $k$ indicate that the best such constructions are equivalent to the above except the order of cocktail party graphs used is $(k+2)/2$, and edges $(1,2)$ and $(1,k+1)$ are to be removed in place of $(1,2)$ and $(1,5)$ respectively. For $k = 6$ this produces graphs with $14400(48)^{\frac{n-14}{8}}$ Hamiltonian cycles for $n = 14 + 8m$ vertices, and for $k = 8$ this produces graphs with $25401600(1968)^{\frac{n-18}{10}}$ vertices. In both cases, these values are greater than $h(n,k)$ for all positive $n$, indicating that this construction is likely to be only useful for $k = 4$. This manuscript is now concluded with a second conjecture. It is worth noting that, should Conjectures \ref{conj1} and \ref{conj2} be proved correct, it would answer in the affirmative the long-standing conjecture \cite{sheehan} that no $r$-regular graphs are uniquely hamiltonian for $r > 2$.

\begin{conjecture}For $n \geq 8$, all 4-regular graphs of order $n$ have at least $9(2)^{\frac{n+2}{6}}$ Hamiltonian cycles.\label{conj2}\end{conjecture}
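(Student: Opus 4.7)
The plan is to combine exhaustive base-case verification for small orders with a structural/inductive argument for larger orders, mirroring the empirical analysis that gave rise to the conjecture. First I would extend the enumerations of Table \ref{tab-minhcs} as far as is computationally feasible, pinning down the base cases $n \in \{8,9,\dots,n_0\}$ for some modest $n_0$; this is essentially a finite verification that uses GENREG together with Chalaturnyk's enumerator. Having the induction hypothesis available at every smaller order is what will make the structural step possible.

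For the inductive step I would split into cases according to the vertex connectivity $\kappa(G)$ of a 4-regular graph $G$ on $n$ vertices. The table and the discussion of minimal examples suggest that every known minimizer satisfies $\kappa(G) = 2$, so the natural strategy is to prove a blanket lower bound that substantially exceeds $9(2)^{(n+2)/6}$ whenever $\kappa(G) \geq 3$, and to analyse the $\kappa(G) = 2$ case directly. For $\kappa(G) \geq 3$ one can appeal to chord-counting arguments in the spirit of Thomason's lollipop technique, or to Gebauer-style counting, to produce a lower bound exponential in $n$ with a base strictly larger than $2^{1/6}$; such a bound would comfortably dominate the target. For $\kappa(G) = 2$, I would locate a separating pair $\{u,v\}$ and let $H_1, H_2$ be the two sides. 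Every Hamiltonian cycle of $G$ restricts to a pair of Hamiltonian $u$--$v$ paths, one in each side after the appropriate closure. I would attempt to show that either (i) one side is forced to be a $K_5$ or a cocktail party graph of order $3$ with a fixed attachment pattern (in which case the counting reduces to a local multiplication by $6$ or $2$, exactly matching the $G^*_{n,4}$ analysis), or (ii) each side, once its two cut-vertices are identified and a ghost edge added, becomes a 4-regular graph of strictly smaller order to which the induction hypothesis applies, and the product of the two smaller bounds is at least $9(2)^{(n+2)/6}$.

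The hard part is making case (ii) actually work. Several difficulties appear. First, after contracting or closing a 2-cut, the resulting graphs need not be 4-regular; vertices incident to removed or added edges may have altered degrees, and one may have to enlarge the induction to include 4-regular graphs with a bounded number of ``defect'' vertices. Second, the conjectured bound is extremely tight, matching the $G^*_{n,4}$ construction exactly at $n = 10 + 6m$, so any slack in the local multiplicity at $\{u,v\}$ or in the product of the two fragment bounds will push the estimate above $9(2)^{(n+2)/6}$ and fail. Third, the blanket lower bound for $\kappa(G) \geq 3$ is itself not currently available in the literature at the strength required; existing bounds of Gebauer type are stronger in base but are proved only for a restricted class (e.g.\ 4-regular graphs arising from specific constructions) or give a multiplicative factor that degrades badly in the presence of small edge cuts.

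The genuine obstacle, therefore, is the sharpness of the conjectured constant $9$ together with the tightness of the base $2^{1/6}$; even a proof for all $n$ sufficiently large, with an unspecified multiplicative error, would be a substantial advance. As noted after the statement, any successful proof of Conjectures \ref{conj1} and \ref{conj2} resolves the long-standing problem of Sheehan for $r = 4$, which is strong evidence that the final step will not be routine. For this reason my proposal would aim, as a first deliverable, at the weaker statement that 4-regular graphs of order $n$ have at least $c \cdot 2^{n/6}$ Hamiltonian cycles for some absolute constant $c > 0$, and only then try to sharpen $c$ to $9 \cdot 2^{1/3}$ by refining the $\kappa(G) = 2$ analysis.
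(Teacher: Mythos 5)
This statement is a \emph{conjecture} in the paper: the author provides no proof of it, only the empirical evidence of Table \ref{tab-minhcs} (exhaustive enumeration up to $n=16$, and connectivity-2-restricted enumeration for $n=17,18$) together with the construction $G^*_{n,4}$, which shows the bound is attained for $n = 10+6m$. So there is nothing in the paper to compare your argument against, and your proposal, as you yourself acknowledge in its last two paragraphs, is a research programme rather than a proof. The honest self-assessment is to your credit, but as submitted it does not establish the statement, and the gaps are not cosmetic.

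Concretely: (i) the ``blanket lower bound for $\kappa(G)\geq 3$'' does not exist in the literature at the strength you need --- Thomason's lollipop argument yields parity statements about Hamiltonian cycles through a fixed edge in graphs of \emph{odd} degree and gives no exponential lower bound for 4-regular graphs, and Gebauer's results are \emph{upper} bounds via constructions, not lower bounds for all graphs. (ii) The 2-cut decomposition does not feed the induction: if $\{u,v\}$ is a separating pair, a Hamiltonian cycle restricts to Hamiltonian $u$--$v$ \emph{paths} in each side, the closed-up sides are not 4-regular (the degrees of $u$, $v$ and their neighbours are disturbed), and your hypothesis counts Hamiltonian \emph{cycles} in 4-regular graphs, so it simply does not apply to the fragments. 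The paper's own construction shows why this is fatal rather than technical: in $G^*_{n,4}$ one fragment is $K_5$ minus an edge, which contributes only $6$ Hamiltonian paths between its attachment vertices --- far fewer than any bound of the form $9(2)^{(5+2)/6}$ would suggest --- so the inductive product cannot be assembled from the conjectured cycle bound alone; you would need a separate, currently unavailable, lower bound on Hamiltonian $u$--$v$ paths in near-4-regular fragments. (iii) A successful proof would, as the paper notes, resolve Sheehan's conjecture for $r=4$; no amount of base-case computation plus the sketched induction closes that distance. If you want a genuine deliverable, your proposed weakening to $c\cdot 2^{n/6}$ for some absolute $c>0$ is the right target, but even that requires the $\kappa(G)\geq 3$ lower bound and the path-counting lemma for 2-cut fragments, neither of which you have supplied.
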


\end{document}